\newcounter{theoremcounter}
\newenvironment{theorem}{\refstepcounter{theoremcounter}{\vspace*{3mm}
   \par\noindent \bf Theorem \thetheoremcounter.}\it}{}
\newenvironment{lemma}{\refstepcounter{theoremcounter}{\vspace*{3mm}
   \par\noindent \bf Lemma \thetheoremcounter.}\it}{}
\renewcommand{\=}{\overset{ \text{def} }=}
\def\fsu#1#2#3{ {#1}_{#2}+\dots + {#1}_{#3}    }
\newcommand{\thmref}[1]{Theorem~\ref{#1}}
\newcommand{\lemref}[1]{Lemma~\ref{#1}}
\newcommand{\secref}[1]{Section~\ref{#1}}
\newenvironment{proof}[1][Proof]{\begin{trivlist}
\item[\hskip \labelsep {\bfseries #1}]}{\end{trivlist}}
\newcommand{\qed}{\nobreak \ifvmode \relax \else
      \ifdim\lastskip<1.5em \hskip-\lastskip
      \hskip1.5em plus0em minus0.5em \fi \nobreak
      \vrule height0.75em width0.5em depth0.25em\fi}
 \renewcommand{\qed}{\nobreak \ifvmode \relax \else
      \ifdim\lastskip<1.5em \hskip-\lastskip
      \hskip1.5em plus0em minus0.5em \fi \nobreak
      \vrule height0.75em width0.5em depth0.25em\fi}
\renewenvironment{proof}[1][Proof]{\begin{trivlist}
\item[\hskip \labelsep {\bfseries #1}]}{\end{trivlist}}
\renewcommand{\=}{\overset{ \text{def} }=}
\def\P{\mathbb{P}}
\def\Z{\mathbb{Z}}
\def\I{\mathbb{I}}
\def\R{\mathbb{R}}
\def\N{\mathbb{N}}
\def\M{\mathcal{M}}
\def\E{\mathbf{E}}
\def\F{\mathcal{F}}
\newcommand{\ve}{\varepsilon}
\begin{document}

\title{Random walks maximizing the probability to visit an interval}

\author{D. Dzindzalieta$^{1}$\\dainiusda[eta]gmail.com}

\footnotetext[1] {Vilnius University Institute of Mathematics and Informatics, 4 Akademijos, Vilnius, Lithuania. This research was funded by a grant (No. MIP-12090) from the Research Council of Lithuania}
\footnotetext[2]{Keywords : random walks, maximal inequalities, probability to
visit an interval, large deviations, martingale, super-martingale, bounds for
tail probabilities, inequalities, bounded differences and random
variables, inhomogeneous Markov chains. }
\footnotetext[3]{\it{2000 Mathematics Subject Classification}. Primary 60E15. Secondary 60J10. }
\maketitle
\date{}
\begin{abstract}
We consider random walks, say $W_n=(M_0, M_1,\dots , M_n)$, of length $n$ starting at $0$
and based on the martingale sequence  $M_k$ with
differences $X_m=M_m-M_{m-1}$. Assuming that the differences are bounded,  $|X_m|\leq 1$, we solve the
problem
\begin{equation}
D_n(x)\=\sup\P\left\{ W_n \ \text{visits an interval }\ [x,\infty)\right\},\qquad x\in\R, \label{piirma}
\end{equation}
where $\sup$ is taken over all possible $W_n$. In particular, we describe random walks which maximize the probability
in $\eqref{piirma}$. We also extend the result to super-martingales.
\end{abstract}

\bigskip

\section{Introduction and results}

We consider random walks, say $W_n=(M_0, M_1,\ldots,M_n)$ of length $n$
starting at $0$
 and based on a martingale sequence  $M_k =\fsu X1k$ (assume $M_0=0$) with differences $X_m=M_m-M_{m-1}$.
Let $\mathcal M$ be the class of martingales with bounded
differences such that $|X_m|\leq 1$ and $\E(X_m|\F_{m-1})=0$ with respect to some increasing sequence of algebras $\emptyset\subset\F_0\subset\cdots\subset \F_n$. If a random walk $W_n$ is based on a
martingale sequence of the class $\mathcal M$
then we write symbolically
$W_n\in \mathcal M$. Extensions to super-martingales are provided at the end of the
Introduction.

In this paper we provide a solution of the problem
\begin{equation}
 D_n(x)\= \sup_{W_n\in \mathcal M}\, \P\left\{
W_n \ \text{visits an interval }\ [x,\infty)\right\}, \quad x\in \R
.\label{1.1}
\end{equation}

In particular, we describe random walks which maximize the
probability in $\eqref{1.1}$ and give an explicit expression
of the upper bound $D_n(x)$.
 It turns out that the random walk
maximizing the probability in $\eqref{1.1}$ is an inhomogeneous Markov chain, i.e., given $x$ and $n$, the distribution of $k$th step depends only on $M_{k-1}$ and $k$.
 For integer $x\in \Z $  the maximizing random walk is a simple symmetric random
walk (that is, a symmetric random walk with independent steps of
length $1$) stopped at $x$. For non-integer $x$, the maximizing random walk makes some steps of smaller sizes. Smaller steps are needed
to make a jump so that the remaining distance becomes integer and then continue as a simple random walk.
The average total number
 of the smaller steps is bounded by $2$. For martingales our result can be interpreted as
 a maximal  inequality
$$ \P \left\{\max_{1\leq k\leq n} M_k \geq x \right\}\leq D_n(x).$$

The maximal inequality is
  optimal  since the equality is achieved by martingales related to
the maximizing random walks, that is,
\begin{equation}
\sup_{W_1,\dots ,W_n \in \mathcal M}\,\P \left\{\max_{1\leq k\leq n} M_k \geq x \right\}=D_n(x),\label{1.a1}
\end{equation}
where we denote by $W_k$ a random walk $(M_0,M_1,\ldots,M_k,M_k,\ldots,M_k)\in \M$.

 To prove the result we introduce a general
principle for maximal inequalities for (natural  classes of)
martingales which reads as
\begin{equation}
 \sup_{W_1,\dots ,W_n \in \mathcal M}\,\P \left\{\max_{1\leq k\leq n} M_k \geq x \right\}
 =\sup_{M_n \in \mathcal M}\, \P \{ M_n \geq x \}\label{1.2}\end{equation}
 in our case. It means that for martingales, the solutions of problems of type \eqref{1.1} are \textit{inhomogeneous Markov chains}, i.e., the problem of type \eqref{1.1} can be always reduced to finding a solution of \eqref{1.1} in a class of inhomogeneous Markov chains.

Our methods are similar in
spirit to a method used in  \cite{bentkus2001inequality}, where a solution of a problem $\eqref{1.1}$
  was provided   for integer $x\in \Z$. Namely, he showed that if $R_n = \fsu \ve 1 n $ is a sum of Rademacher random variables such that $\P \{
\ve_i=-1\}=\P\{ \ve_i=-1\}=1/2$ and $B(n,k)$ is a normalized sum of $n - k + 1$ smallest
binomial coefficients, i.e.,
\begin{equation}\label{defbnk}
B(n,k)=2^{-n}\sum_{i=0}^{n-k}\binom{\left\lfloor\frac{i}{2}\right\rfloor}{n}
\end{equation}
where $\lfloor x \rfloor$ denotes an integer part of $x$, then for all $k\in\Z$
\begin{equation*}%\label{defdnk}
D_n(k)=B(n,k)=\begin{cases}2\,\P\{R_n\geq k+1\}+\P\{R_n= k\}\quad &\text{if }n+k\in 2\Z,\\
2\,\P\{R_n\geq k+1\}\quad &\text{if }n+k\in 2\Z+1.
\end{cases}
\end{equation*}

Recently Dzindzalieta, Ju\v skevi\v cius and \v Sileikis $\cite{DJS2012}$ solved the problem $\eqref{1.1}$ in the case of sums of bounded independent symmetric random variables. They showed that if $~{S_n=X_1+\cdots+X_n}$ is a sum of independent symmetric random variables such that $|X_i|\leq1$  then
$$
\P\{S_n\geq x\}\leq\begin{cases}\P\{R_n\geq x\}\qquad &\text{if }n+\left\lceil x\right\rceil\in 2\Z,\\
\P\{R_{n-1}\geq  x\}\qquad &\text{if }n+\left\lceil x\right\rceil\in 2\Z+1,
\end{cases}
$$ where $\left\lceil x\right\rceil$ denotes the smallest integer number greater or equal to $x$. We note that for integer $x$  the random walk based on the sequence $R_k$ stopped at a level $x$ is a solution of $\eqref{1.1}$.

As  far as we are aware, the paper presents the first result
 where problems for martingales of type $\eqref{1.1}$ and $\eqref{1.a1}$ are solved for all $x\in \R$.

Let us turn to more detailed formulations of our results.
For a martingale $M_n\in\M$ and $x\in \R$, we introduce the
stopping time
\begin{equation}
 \tau_x = \min\{ k:\ M_k \geq x\}.\label{x.2222} \end{equation}
The stopping time $\tau_x$ is a non-negative integer valued random
variable possibly taking the value $+\infty$ in cases where
$M_k<x$ for all $k=0,1,\dots $. For a martingale $M _n\in \mathcal M
$, define its  version stopped at level $x$ as
\begin{equation}M_{n,x} = M_{\tau_x\wedge n}, \quad a\wedge b= \min \{a,b\}.\label{x.3}\end{equation}
Given a random walk $W_n=\{0,M_1,\ldots,M_n\}$ it's stopped version is denoted as
$W_{n,x}=\{0,M_{1,x},\dots ,
M_{ n,x}\}$.

%In order to ease the full description of the maximizing random walk we need some notation.
%Let us denote
%\begin{eqnarray*}
%s
%\end{eqnarray*}

%\begin{eqnarray*}
%A_1&=&\{(x,n):x\in\Z,0<x\leq n\},\qquad p_1=\frac12;\label{aa1}\\
%A_2&=&\{(x,n):n\in 2\Z+1,0<x<1\},\qquad p_2=1-x;\label{aa2}\\
%A_3&=&\{(x,n):\lfloor x\rfloor+n\in 2\Z,0<x< n\},\qquad p_3=\frac1{1+\{x\}};\label{aa3}\\
%A_4&=&\{(x,n):\lfloor x\rfloor+n\in 2\Z+1,1<x<n\},\qquad p_4=\frac{1-\{x\}}{2-\{x\}}\label{aa4}.
%\end{eqnarray*}%
% Furthermore, let $q_i=1-p_i$ for $i=1,\ldots,4$.
Fix $n$ and $x>0$. The maximizing random walk $RW_n=\{0,M_1^x,\dots ,
M_n^x\}$ is defined as follows.  We start at $0$. Suppose that after $k$ steps the remaining distance to the target $[x,\infty)$ is $\rho_k$. The distribution of the next step is a Bernoulli random variable (which takes only two values), say $X^{\ast}=X^{\ast}(k,\rho_k,n)$, such that
\begin{equation}\label{eq:nextstep}
\sup\E D_{n-k}(\rho_k-X)=\E D_{n-k}(\rho_k-X^{\ast})
\end{equation} where $\sup$ is taken over all random variables $X$ such that $|X|\leq 1$ and $\E X=0$.

The distribution of the next step $X^{\ast}$ depends on four possible situations. \\
\\
\text{}\quad$i)$ $\rho_k$ is integer;\\
\text{}\quad$ii)$ $n-k$ is odd and $0<\rho_k<1$;\\
\text{}\quad$iii)$ the integer part of $\rho_k+n-k$ is even;\\
\text{}\quad$iv)$ the integer part of $\rho_k+n-k$ is odd and $\rho_k>1$.

\bigskip

After $k$ steps we make a step of length $s_l$ or $s_r$  to the left or right with probabilities $p_i=\frac{s_r}{s_r+s_l}$ and $q_i=\frac{s_l}{s_r+s_l}$ respectively. Let $\{x\}$ denotes the Depending on $(i)$-$(iv)$ we have.\\
\text{}\quad$i)$ $s_l=s_r=1$ with equal $~{\text{probabilities } p_1=q_1=\frac12}$, i.e., we continue as a simple random walk;\\
\text{}\quad$ii)$ $s_l=\rho_k$ and $s_r=1-\rho_k$  with $p_2=1-\{\rho_k\}$ and $q_2=\{\rho_k\}$, i.e., we make a step so that the remaining distance $\rho_{k+1}$ becomes equal either to $0$ or $1$;\\
\text{}\quad$(iii)$ $s_l=\{\rho_k\}$ and $s_r=1$ with $p_3=\frac{1}{1+\{\rho_k\}}$ and $q_3=\frac{\{\rho_k\}}{1+\{\rho_k\}}$, i.e., we make a step to the left so that $\rho_{k+1}$ is of the same parity as $n-k-1$ or to the right side as far as possible ;\\
\text{}\quad$(iv)$ $s_l=1$ and $s_r=1-\{\rho_k\}$ with $p_4=\frac{1-\{\rho_k\}}{2-\{\rho_k\}}$ and $q_4=\frac{1}{2-\{\rho_k\}}$, i.e., we make a step to the left so that $\rho_{k+1}$ is of the same parity as $n-k-1$ or to the right side as far as possible.

In other words if $\rho_k$ is non-integer then the maximizing random walk jumps so that $\rho_{k+1}$ becomes of the same parity as the remaining number of steps $n-k-1$ or the step of length $\min\{x,1\}$ to the other side. If the remaining distance $\rho_k$ is integer, then it continues as a simple random walk.

\bigskip

The main result of the paper is the following theorem.
\begin{theorem}\label{mainas}
The random walk $RW_{n}$ stopped at $x$ maximizes the probability to visit an interval $[x,\infty)$ in first $n$ steps, i.e.,
the following equalities hold
\begin{equation}
D_n(x)=\P\{RW_{n,x}\ \text{visits an interval }\ [x,\infty)\}=\P\{M_{n,x}^x\geq x\},\label{eq:mainas}
\end{equation} for all  $x\in\R$ and $n=0,1,2,\ldots$.
\end{theorem}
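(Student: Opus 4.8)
The plan is to prove the two equalities in \eqref{eq:mainas} separately, and to build the whole argument around the reduction principle \eqref{1.2} together with a backward induction on $n$.

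\textbf{Step 1: the reduction to $M_n\geq x$.} First I would establish \eqref{1.2}, namely that for martingales in $\M$ one has $\sup_{W_1,\dots,W_n}\P\{\max_{k\le n}M_k\ge x\}=\sup_{M_n\in\M}\P\{M_n\ge x\}$. The ``$\ge$'' direction is trivial since each truncated walk $W_k$ lies in $\M$. For the ``$\le$'' direction, given any $W_n\in\M$ with maximum exceeding $x$, I would pass to its stopped version $M_{n,x}=M_{\tau_x\wedge n}$; this is still a martingale in $\M$ (stopping preserves the martingale property and the bound $|X_m|\le1$), and on the event $\{\tau_x\le n\}$ one has $M_{n,x}=M_{\tau_x}\ge x$, so $\P\{M_{n,x}\ge x\}\ge\P\{\max_{k\le n}M_k\ge x\}$. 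Hence it suffices to analyse $D_n(x)=\sup_{M_n\in\M}\P\{M_n\ge x\}$, and this immediately explains why the maximizer may be taken to be a stopped process, giving the second equality in \eqref{eq:mainas} once the first is proved.

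\textbf{Step 2: the dynamic programming / backward recursion.} The function $D_n(x)$ should satisfy the Bellman-type recursion
\begin{equation}
D_n(x)=\sup_{X}\ \E\,D_{n-1}(x-X),\qquad D_0(x)=\mathbf 1\{x\le 0\},\label{plan:bellman}
\end{equation}
where the supremum runs over all random variables with $|X|\le1$, $\E X=0$. The inequality $D_n(x)\le\sup_X\E D_{n-1}(x-X)$ comes from conditioning on $X_1=\F_1$ and using the (to-be-verified) monotonicity of the construction; the reverse comes from concatenating an optimal first step with optimal continuations. A standard fact I would invoke is that, since the payoff is bounded and we only constrain mean and range, the extremal $X$ in \eqref{plan:bellman} can be taken two-valued (Bernoulli), which is exactly what \eqref{eq:nextstep} asserts. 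So the content of the theorem is: the specific two-point laws described in cases $(i)$--$(iv)$ actually attain the supremum in \eqref{plan:bellman}, and iterating them from $(x,n)$ produces the value $\P\{RW_{n,x}\text{ visits }[x,\infty)\}$.

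\textbf{Step 3: induction on $n$ with an explicit formula.} I would prove by induction on $n$ that $D_n(x)$ equals the value produced by $RW_{n,x}$, simultaneously identifying a closed form: for integer remaining distance $\rho$ it should be the quantity $B(n,\lceil\rho\rceil)$ of \cite{bentkus2001inequality}, and for non-integer $\rho$ an interpolation determined by cases $(ii)$--$(iv)$. The base case $n=0$ is \eqref{plan:bellman}. For the inductive step, fix $x$, write $\rho=x$ (the remaining distance at step $0$), and in each of the four parity/location regimes $(i)$--$(iv)$ I would: (a) plug the prescribed Bernoulli step $X^\ast$ into $\E D_{n-1}(x-X^\ast)$ using the inductive formula for $D_{n-1}$, checking it telescopes to the claimed formula for $D_n(x)$; and (b) verify optimality, i.e.\ that no other mean-zero $|X|\le1$ law does better. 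For (b) the clean route is to exhibit, for each regime, an affine ``majorant'' function $\ell(t)\ge D_{n-1}(t)$ with $\ell$ linear on the relevant range and $\ell(x-X^\ast)=D_{n-1}(x-X^\ast)$ almost surely; then for any competitor $X$, $\E D_{n-1}(x-X)\le\E\ell(x-X)=\ell(x)=\E\ell(x-X^\ast)=\E D_{n-1}(x-X^\ast)$, using $\E X=0$. Constructing these affine majorants (equivalently, checking a concavity/secant-line property of $D_{n-1}$ on each dyadic-parity cell) is the crux.

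\textbf{Main obstacle.} The hard part is Step 3(b): showing that $D_{n-1}$ is, on each cell dictated by the parity of $n-1$ and by the three cut points $0$, $\{\rho\}$-translates, and $1$, dominated by the secant line through the two points where the optimal step lands. Equivalently one must control the ``shape'' of $D_{n-1}$ — it is piecewise built from shifted copies of $B(n-1,\cdot)$ and is neither globally concave nor convex, so the majorant has to be chosen cell by cell, and one must check the breakpoints of the four regimes match up consistently as $n$ decreases (so that the inductive formula stays coherent). Handling the boundary sub-cases $0<\rho<1$ with $n-k$ odd versus even, and the $\rho>1$ restriction in $(iv)$, is where the bookkeeping concentrates; once the majorant inequality is in hand, optimality and the value formula both follow, and combining with Steps 1–2 yields \eqref{eq:mainas} for all $x\in\R$ and all $n$. \qed
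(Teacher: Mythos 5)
Your plan follows essentially the same route as the paper: reduce the maximal inequality to a tail bound by stopping at level $x$ (the paper's Theorem~\ref{x.1}), then induct on $n$ by conditioning on the first step and bounding $\E D_{k}(x-X_1)$ via a linear majorant of $t\mapsto D_k(x-t)$ that touches it at the two landing points of the prescribed Bernoulli step. The ``crux'' you correctly isolate --- controlling the shape of $D_{k}$ cell by cell so that the secant line through the optimal landing points dominates on all of $[-1,1]$ --- is exactly what the paper's Lemma~\ref{lem:dprop} (piecewise convexity and differentiability of $D_n$) and Lemma~\ref{lem:dnelyg} (the three comparison inequalities for the four parity regimes) supply, so your outline is correct and coincides with the paper's argument, though it leaves that verification unexecuted.
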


\bigskip

An explicit definition of $D_n(x)$ depends on the parity of $n$. Namely, let $x=m+\alpha$ with $m\in\Z$ and $0\leq\alpha<1$.

 If $m+n$ is odd then
\begin{equation}
D_n(x)=\sum_{i=0}^{h}a_i\,B(n-i-1,m+i), \quad a_i=\frac{\alpha^i}{(1+\alpha)^{i+1}},\label{eq:a3forma}
\end{equation} where $h=(n-m-1)/2$.

If $m+n$ is even then
\begin{equation}
D_n(x)=\sum_{i=0}^{m+1}b_i\, B(n-i-1,m-i+1),\label{eq:a4forma}
\end{equation}
where $b_i=\frac{(1-\alpha)^i}{(2-\alpha)^{i+1}}$, for $i<m$, $b_m=\alpha\left(\frac{1-\alpha}{2-\alpha}\right)^m$ and $b_{m+1}=(1-\alpha)\left(\frac{1-\alpha}{2-\alpha}\right)^m$.

\bigskip

It is easy to see from $\eqref{eq:a3forma}$ and $\eqref{eq:a4forma}$ that $D_n$ is decreasing and continuous for all $x\in\R$ except at $x=n$ it has a jump. In particular we have that $D_n(x)=1$ for $x\leq 0$ and $D_n(x)=0$ for $x>n$.
In $\secref{sec:proofs}$ we prove that the function $D_n$ is piecewise convex and piecewise continuously differentiable. We also give the recursive definition of the function $D_n$.

\bigskip

A great number of papers is devoted to construction of {\it upper}
bounds for tail probabilities of sums of random variables. The reader can find classical results in books \cite{petrov1975sums, shorack2009empirical}. One of the first and probably the most known non-asymptotic bound for $D_n(x)$ was given by Hoeffding in 1963 \cite{hoeffding1963probability}.
He proved that for all $x$ the function $D_n(x)$ is bounded by $\exp\{-x^2/2n\}$. Hoeffding's inequalities remained unimproved until 1995 when Talagrand \cite{talagrand1995missing} inserted certain missing factors. Bentkus 1986--2007 \cite{bentkus87banach, bentkus2001inequality,bentkus2004hoeffding, bentkus2006domination} developed
induction based methods. If it is possible to overcome related technical difficulties, these methods
lead to the best known upper bounds for the tail probabilities (see \cite{BD2010, DJS2012} for examples of tight bounds received using these methods). In \cite{bentkus2001inequality}
first tight bounds for $D_n(x)$ for integer $x$ was received. To overcome technical difficulties for non-integer $x$ in \cite{bentkus2001inequality} the linear interpolation between integer points was used, thus losing precision for non-integer $x$. Our method is similar in spirit to \cite{bentkus2001inequality}.

\subsection{An extension to super-martingales}
Let $\mathcal{SM}$ be the class of super-martingales with bounded differences such
that $|X_m|\leq 1$ and $~{\E (X_k|\F_{k-1})\leq 0}$ with respect to some increasing sequence of algebras $\emptyset\subset\F_0\subset\cdots\subset \F_n$. We show that
\begin{theorem}\label{th:supermart} For all $x\in R$ we have
\begin{equation}
\sup_{SW_n \in\, \mathcal{SM}}\, \P\left\{ SW_n \
\text{visits an interval }\ [x,\infty)\right\}=D_n(x).\label{ineq:superm}
\end{equation}
\end{theorem}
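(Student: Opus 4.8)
\medskip
\noindent\emph{Proof proposal.} The inequality ``$\geq$'' in \eqref{ineq:superm} is free of work: every martingale of class $\M$ is in particular a super-martingale of class $\mathcal{SM}$, so $\M\subseteq\mathcal{SM}$ and hence, by \thmref{mainas},
\[
\sup_{SW_n\in\mathcal{SM}}\P\{SW_n\text{ visits }[x,\infty)\}\ \geq\ \sup_{W_n\in\M}\P\{W_n\text{ visits }[x,\infty)\}\ =\ D_n(x).
\]
The content of the theorem is therefore the reverse inequality, and the plan is to establish it by a \emph{pathwise monotone coupling}: to each super-martingale we attach, on the same probability space, a martingale of class $\M$ whose trajectory lies everywhere above that of the super-martingale. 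Granting this, every sample path on which $SW_n$ visits $[x,\infty)$ is a path on which the dominating walk visits $[x,\infty)$, so the visiting probability of $SW_n$ is bounded by that of the dominating walk, which by \thmref{mainas} is at most $D_n(x)$.

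To construct the coupling I would proceed as follows. Let $SW_n=(0,M_1,\dots,M_n)$ be based on a super-martingale with differences $X_k=M_k-M_{k-1}$ satisfying $|X_k|\leq1$ and $\E(X_k|\F_{k-1})\leq0$. For $t\in[-1,1]$ set $g_k(t)=\E(\max(X_k,t)|\F_{k-1})$. Since $t\mapsto\max(X_k,t)$ is $1$-Lipschitz and bounded by $1$, the conditional function $g_k$ is $1$-Lipschitz (hence continuous) and nondecreasing in $t$, with $g_k(-1)=\E(X_k|\F_{k-1})\leq0$ and $g_k(1)=1$. Then one defines the $\F_{k-1}$-measurable threshold $t_k=\inf\{t\in[-1,1]:g_k(t)\geq0\}$ and puts $\widetilde X_k=\max(X_k,t_k)$ and $\widetilde M_k=\widetilde X_1+\dots+\widetilde X_k$. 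The facts to check are all pointwise: $\widetilde X_k\leq1$ because $X_k\leq1$ and $t_k\leq1$; $\widetilde X_k\geq t_k\geq-1$ because $g_k(-1)\leq0$; $\E(\widetilde X_k|\F_{k-1})=g_k(t_k)=0$ by continuity of $g_k$ together with $g_k(1)=1>0$; and $\widetilde X_k\geq X_k$, whence $\widetilde M_k\geq M_k$ for every $k$. Consequently $\widetilde M=(\widetilde M_k)$ is a martingale with respect to $(\F_k)$ with $|\widetilde X_k|\leq1$, i.e. $\widetilde{SW}_n=(0,\widetilde M_1,\dots,\widetilde M_n)\in\M$.

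The conclusion is then immediate: if $M_k\geq x$ for some $k\leq n$, then $\widetilde M_k\geq M_k\geq x$, so
\[
\P\{SW_n\text{ visits }[x,\infty)\}\ \leq\ \P\{\widetilde{SW}_n\text{ visits }[x,\infty)\}\ \leq\ D_n(x)
\]
by \thmref{mainas}; taking the supremum over $SW_n\in\mathcal{SM}$ and combining with the lower bound gives \eqref{ineq:superm}. (For $x\leq0$ there is nothing to prove: $M_0=0\geq x$, so every $SW_n$ visits $[x,\infty)$, and $D_n(x)=1$.)

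The only step I expect to require genuine care is the measurable selection of the threshold $t_k$: one must work with a jointly measurable version of $(t,\omega)\mapsto g_k(t,\omega)$ — obtained, say, from a regular conditional distribution of $X_k$ given $\F_{k-1}$, against which $g_k$ is an integral — so that $\omega\mapsto t_k(\omega)$ is truly $\F_{k-1}$-measurable and the conditional-mean identity $\E(\widetilde X_k|\F_{k-1})=0$ holds almost surely. This is routine measure-theoretic bookkeeping rather than a probabilistic obstacle; once it is in place, the argument is entirely driven by the pointwise domination $\widetilde M_k\geq M_k$ together with \thmref{mainas}, and no analogue of the delicate case analysis needed for \thmref{mainas} itself is required.
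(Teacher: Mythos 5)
Your proposal is correct and follows essentially the same strategy as the paper: couple the given super-martingale with a martingale of class $\M$ that dominates it pathwise, and then invoke \thmref{mainas}. The only difference is the explicit construction of the dominating walk --- the paper adds the non-negative increments $Y_k=(X_k-1)\,\E(X_k|\F_{k-1})/(1-\E(X_k|\F_{k-1}))$, an explicit measurable affine correction of $X_k$, which sidesteps the measurable-selection care that your truncation threshold $t_k$ requires, whereas you truncate $X_k$ from below at a zero of $t\mapsto\E(\max(X_k,t)|\F_{k-1})$; both yield the required pathwise domination.
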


For super-martingales $\thmref{th:supermart}$ can also be
interpreted as the maximal inequality

\begin{equation*} \P \left\{\max_{1\leq k\leq n} M_k \geq x \right\}\leq D_n(x), \end{equation*}
where $M_k \in \mathcal {SM}$, and furthermore, the $\sup$
over the class of super-martingales is achieved on a martingale
class.

\textbf{Proof of $\thmref{th:supermart}$.} Suppose that $\sup$ in $\eqref{ineq:superm}$ is achieved with some super-martingale $SM_n=X_1+\cdots+X_n$. Let $M_n=Y_1+\cdots+Y_n$ be a sum of random variables, such that $$(Y_k|\F_{k-1})=((X_k|\F_{k-1})-1)\frac{\E(X_k|\F_{k-1})}{1-\E(X_k|\F_{k-1})}.$$ It is easy to see that $Y_k\geq 0$, $|X_k+Y_k|\leq 1$ and  $\E(X_k+Y_k|\F_{k-1})=0$, so $SM_n+M_n\in\mathcal M$.
Since $Y_k\geq 0$ we have that $M_n\geq 0$, so $\P\left\{ SM_n+M_n\geq x\right\}$ is greater or equal to $\P\left\{ SM_n\geq x\right\}$. This proves the theorem.~$\square$
\section{Maximal inequalities for martingales are equivalent to
inequalities for tail probabilities}

Let $\mathcal M $ be a class of martingales. Introduce the upper
bounds for tail probabilities and in the maximal inequalities as
\begin{equation*}
 B_n(x)\=\sup_{M_n \in \mathcal M} \P  \{ M_n \geq x \},
\quad B_n^\ast (x)\=\sup_{M_n \in \mathcal M} \P \left\{ \max_{0\leq
k\leq n }M_k \geq x \right\}\end{equation*}
for $x\in \R$ (we define $M_0=0$).

Let  as before $\tau_x$ be a stopping time defined by
\begin{equation}
 \tau_x = \min\{ k:\ M_k \geq x\}.\end{equation}

\begin{theorem}\label{x.1}
If a class $\mathcal M$ of martingales is
closed under stopping at level $x$, then
$$B_n(x)\equiv B_n^\ast (x).$$
\end{theorem}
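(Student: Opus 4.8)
The plan is to prove the two inequalities $B_n(x) \leq B_n^\ast(x)$ and $B_n^\ast(x) \leq B_n(x)$ separately. The first is immediate: for any $M_n \in \mathcal M$ the inclusion $\{M_n \geq x\} \subseteq \{\max_{0\leq k\leq n} M_k \geq x\}$ holds (take $k=n$), hence $\P\{M_n \geq x\} \leq \P\{\max_{0\leq k\leq n} M_k \geq x\} \leq B_n^\ast(x)$, and taking the supremum over $\mathcal M$ gives $B_n(x) \leq B_n^\ast(x)$.

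For the reverse inequality I would fix an arbitrary $M_n \in \mathcal M$ and pass to its stopped version $M_{n,x} = M_{\tau_x \wedge n}$, which again belongs to $\mathcal M$ precisely because $\mathcal M$ is assumed closed under stopping at level $x$. The crux is the event identity
\[
\left\{ \max_{0\leq k\leq n} M_k \geq x \right\} = \{\tau_x \leq n\} = \{ M_{n,x} \geq x \},
\]
valid because on $\{\tau_x \leq n\}$ one has $M_{n,x} = M_{\tau_x} \geq x$ by the definition of $\tau_x$, whereas on $\{\tau_x > n\}$ one has $M_k < x$ for every $k \leq n$, and in particular $M_{n,x} = M_n < x$. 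Therefore $\P\{\max_{0\leq k\leq n} M_k \geq x\} = \P\{M_{n,x} \geq x\} \leq B_n(x)$, and taking the supremum over $M_n \in \mathcal M$ yields $B_n^\ast(x) \leq B_n(x)$. Combining the two inequalities gives $B_n(x) \equiv B_n^\ast(x)$.

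The only point demanding any care — and it is a minor one — is the middle equality of events above: one must verify that the stopped process sits at or above $x$ on exactly the event that the original walk visits $[x,\infty)$ within $n$ steps, including the boundary case $\tau_x = +\infty$, where the stopped and unstopped walks coincide. There is no genuine analytic difficulty; the whole content of the theorem is the structural reduction, and the closure hypothesis is exactly what permits replacing an arbitrary member of $\mathcal M$ by its stopped version without leaving the class.
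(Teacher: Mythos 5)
Your proof is correct and follows essentially the same route as the paper: the easy inclusion $\{M_n\geq x\}\subseteq\{\max_{0\leq k\leq n}M_k\geq x\}$ for one direction, and for the other the replacement of $M_n$ by its stopped version $M_{\tau_x\wedge n}$ (kept inside $\mathcal M$ by the closure hypothesis) together with the event identity $\{\max_{0\leq k\leq n}M_k\geq x\}=\{M_{\tau_x\wedge n}\geq x\}$. The only difference is that you spell out the verification of that event identity, which the paper states without comment.
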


We can interpret $\thmref{x.1}$ by saying that inequalities for tail
probabilities for natural classes of martingales imply (seemingly
stronger) maximal inequalities. This means that maximizing martingales are \textit{inhomogeneous Markov chains}. Assume that for all $M_n\in \mathcal
M$ we have
$$\P \{ M_n\geq x\} \leq g_n(x)$$
 with some function $g$ which depends only on $n$ and the class $\mathcal M$. Then it follows that
$$\P \left\{ \max_{0\leq k\leq n} M_k\geq x\right\} \leq g_n(x).$$
In particular, equalities $\eqref{1.1}$--$\eqref{1.2}$ are equivalent.

\textbf{Proof of $\thmref{x.1}$.} It is clear that $B_n\leq B_n^\ast$
since $M_n\leq \max\limits_{0\leq k\leq n} M_k$. Therefore it
suffices to check the opposite inequality $B_n\geq B_n^\ast$. Let
$M_n\in \mathcal M$. Using the fact that $M_{\tau_x\wedge n}\in \mathcal M$, we have
\begin{equation}
 \P \left\{ \max_{0\leq k\leq n} M_k\geq x\right\} = \P \left\{ M_{\tau_x\wedge n}\geq x\right\}\leq B_n(x). \label{x.34}
 \end{equation}
Taking in $\eqref{x.34}$ $\sup$ over $M_n\in \mathcal M$, we derive $B_n^\ast
\geq B_n$. ~$\square$

\bigskip

In general conditions of $\thmref{x.1}$ are fulfilled under usual moment and range conditions.
That is, conditions of type
$$\E \left( |X_k|^{\alpha_k}\, \left|\, \mathcal F_{k-1}\right. \right)\leq g_k,\quad \left(X_k\, \left|\, \mathcal F_{k-1}\right. \right)\in I_k,$$
with some $\mathcal F_{k-1}$-measurable $\alpha_k\geq 0$, $g_k\geq 0$,
and intervals $I_k$ with $\mathcal F_{k-1}$-measurable endpoints.
One can use as well assumptions like symmetry, unimodality, etc.

\bigskip

\section{Proofs}\label{sec:proofs}
In order to prove $\thmref{mainas}$ we need some additional lemmas.
\begin{lemma}\label{lem:convx}
Suppose $f\in C^1(0,2)$ is a continuously differentiable, non-increasing, convex function on $(0,2)$. Suppose that $f$ is also two times differentiable on intervals $(0,1)$ and $(1,2)$. The function $F:(0,2)\rightarrow R$ defined as
\begin{eqnarray*}
F(x)&=&\frac{1}{x+1}f(0)+\frac{x}{x+1}f(x+1)\qquad\text{for }\,x\in(0,1];\\
F(x)&=&\frac{2-x}{3-x}f(x-1)+\frac{1}{3-x}f(2)\qquad\text{for }\,x\in(1,2)
\end{eqnarray*}
is convex on intervals $(0,1)$ and $(1,2)$.
\end{lemma}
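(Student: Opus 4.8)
Since the Lemma only asserts convexity on each of the open intervals $(0,1)$ and $(1,2)$ separately, there is nothing to check at the breakpoint $x=1$, and it suffices to show $F''\geq 0$ on each subinterval. Note first that $F$ is indeed $C^2$ there: on $(0,1)$ the argument $x+1$ lies in $(1,2)$, where $f$ is twice differentiable, so $F$ is a $C^2$ function of $x$; symmetrically, on $(1,2)$ the argument $x-1$ lies in $(0,1)$, where $f$ is twice differentiable. Thus $f''$ is only ever evaluated on the open intervals where it is assumed to exist.

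For the first piece I would rewrite $F(x)=f(x+1)+\frac{1}{x+1}\bigl(f(0)-f(x+1)\bigr)$ for $x\in(0,1)$ and differentiate twice. Collecting the first-order terms, one obtains the identity
$$F''(x)=\frac{2}{(x+1)^2}\left[f'(x+1)-\frac{f(x+1)-f(0)}{(x+1)-0}\right]+\frac{x}{x+1}\,f''(x+1).$$
The last summand is nonnegative because $f$ is convex and $x>0$. The bracket is nonnegative by the chord (secant-slope) inequality for convex functions: the slope of the chord of $f$ joining $0$ and $x+1$ does not exceed the derivative $f'(x+1)$ at the right endpoint (by the mean value theorem together with monotonicity of $f'$). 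Hence $F''\geq 0$ on $(0,1)$.

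For the second piece the computation is entirely parallel. Writing $F(x)=f(x-1)+\frac{1}{3-x}\bigl(f(2)-f(x-1)\bigr)$ for $x\in(1,2)$ and differentiating twice gives
$$F''(x)=\frac{2-x}{3-x}\,f''(x-1)+\frac{2}{(3-x)^2}\left[\frac{f(2)-f(x-1)}{2-(x-1)}-f'(x-1)\right].$$
On $(1,2)$ we have $2-x>0$ and $3-x>0$, so the first summand is nonnegative by convexity of $f$; and, using $3-x=2-(x-1)$, the bracket is the chord slope of $f$ over $[x-1,2]$ minus $f'(x-1)$, which is nonnegative by the same chord inequality (now comparing with the derivative at the left endpoint). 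Therefore $F''\geq 0$ on $(1,2)$ as well, completing the proof.

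The two differentiations are routine bookkeeping; the only genuine point is to package the first-order terms so that the convexity of $f$ enters precisely through the chord-slope inequality. (Monotonicity of $f$ is not needed for this particular statement, though it will be used elsewhere.)
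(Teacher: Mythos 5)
Your proof is correct and follows essentially the same route as the paper's: compute $F''$ on each subinterval and observe that the $f''$ term is nonnegative by convexity while the remaining term is nonnegative by the chord-slope inequality (your second-derivative formulas agree exactly with the paper's \eqref{pirma2} and \eqref{antr2}). Your side remark that monotonicity of $f$ is not actually needed here is accurate --- the paper invokes ``decreasing and convex'' but only convexity enters.
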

\begin{proof}
Since the function $f$ is decreasing and convex, we have that
\begin{eqnarray}
f^{\prime}(x+1)&\geq&\frac{f(x+1)-f(0)}{x+1}\qquad\text{for }\,x\in(0,1);\label{pirma}\\
f^{\prime}(x-1)&\leq&\frac{f(2)-f(x-1)}{3-x}\qquad\text{for }\,x\in(1,2).\label{antr}
\end{eqnarray}

For $x\in(0,1)$ simple algebraic manipulations gives \begin{equation}
F^{\prime\prime}(x)= \frac{x}{x+1}f^{\prime\prime}(x+1)+\frac{2}{(x+1)^2}\left(f^{\prime}(x+1)-\frac{f(x+1)-f(0)}{x+1}\right).\label{pirma2}
\end{equation}
By $\eqref{pirma}$ the second term in right hand side of $\eqref{pirma2}$ is non-negative.
Thus $F^{\prime\prime}(x)\geq0$ for all $x\in (0,1)$.

For $x\in(1,2)$ similar algebraic manipulation gives
\begin{equation}F^{\prime\prime}(x)= \frac{2-x}{3-x}f^{\prime\prime}(x-1)-\frac{2}{(3-x)^2}\left(f^{\prime}(x-1)-\frac{f(2)-f(x-1)}{3-x}\right).\label{antr2}
\end{equation}
By $\eqref{antr}$ the second term in right hand side of $\eqref{antr2}$ is non-negative.
Thus $F^{\prime\prime}(x)\geq0$ for all $x\in (1,2)$.\qed
\end{proof}

We use $\lemref{lem:convx}$ to prove that the function $x\rightarrow D_n(x)$ satisfies the following analytic properties.
\begin{lemma}\label{lem:dprop}
 The function $D_n$ is convex and continuously differentiable on intervals $~{(n-2,n), (n-4,n-2),\dots, (0,2\{n/2\})}$.
\end{lemma}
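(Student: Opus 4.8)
The plan is to induct on $n$, using Lemma~\ref{lem:convx} to pass from level $n-1$ to level $n$, and working from the recursive description of $D_n$, namely $D_n(x)=\sup\{\E D_{n-1}(x-X):|X|\le 1,\ \E X=0\}$, where the supremum is attained by the explicit two–point step of the maximizing walk $RW_n$. It is cleanest to prove a slightly stronger statement along the induction: on each of the listed intervals $D_n$ is non-increasing, convex, continuously differentiable, \emph{and} twice continuously differentiable on every unit subinterval $(i,i+1)$ it contains. Monotonicity is clear from the definition \eqref{1.1} (enlarging $x$ shrinks $[x,\infty)$). For the first few values $n=0,1,2$ all the asserted regularity is immediate: a short computation from the recursion gives $D_0=\I_{(-\infty,0]}$, $D_1(x)=1/(1+x)$ on $(0,1)$, and $D_2(x)=1-x/2$ on $(0,1)$ with $D_2(x)=1/(2x)$ on $(1,2)$ (equivalently, one reads these off \eqref{eq:a3forma}--\eqref{eq:a4forma}); these furnish the base of the induction.

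For the inductive step fix $n\ge 3$ and one interval, say $(j,j+2)$ with $j+n$ even, and split it at the interior integer $j+1$. Two kinds of unit pieces are handled directly. The topmost piece $(n-1,n)$ equals $\tfrac12 D_{n-1}(\cdot-1)$, since $D_{n-1}\equiv 0$ beyond $n-1$ and so the maximizing step there is the symmetric unit step; and, when $n$ is even, the bottom piece $(0,1)$ is affine, since the maximizing step makes the remaining distance $0$ or $1$ and hence $D_n(x)=(1-x)+xD_{n-1}(1)$. For both of these, convexity, monotonicity and the required smoothness are inherited from $D_{n-1}$ on the corresponding unit subinterval one level down. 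For any remaining unit piece $(i,i+1)$, one checks — either directly from the optimal two–point step of $RW_n$ on that piece, or by resumming \eqref{eq:a3forma} or \eqref{eq:a4forma} with the Pascal-type identity $B(m,k)=\tfrac12B(m-1,k-1)+\tfrac12B(m-1,k+1)$ — that, after the affine change of variable carrying the relevant length-$2$ window onto $(0,2)$, the restriction of $D_n$ to that piece coincides with one of the two branches of the function $F$ of Lemma~\ref{lem:convx}, with $f$ taken to be the corresponding restriction of $D_{n-1}$ to that window. By the induction hypothesis this $f$ is non-increasing, convex, $C^1$ on $(0,2)$ and twice differentiable on $(0,1)$ and $(1,2)$ — exactly the hypotheses of Lemma~\ref{lem:convx} — so that lemma yields convexity of $D_n$ on $(i,i+1)$; and $D_n$ is twice continuously differentiable there because the branch of $F$ is a rational function with no pole on that interval.

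What is left is to glue the two halves at $j+1$. Here I would differentiate the expression for $D_n$ on $(j,j+1)$ and the one on $(j+1,j+2)$, let $x\to j+1$ from either side, and verify that $D_n'((j+1)^-)=D_n'((j+1)^+)$; in these limits the weights (the coefficients $a_i,b_i$, equivalently the probabilities in the two–point steps) contribute elementary factors, and the surviving $D_{n-1}$-terms are evaluated at the integers $j$ and $j+2$, which are interior integers of level-$(n-1)$ intervals and hence points where $D_{n-1}$ is $C^1$ by the induction hypothesis. Once the one-sided derivatives are seen to agree, convexity on each open half together with continuity of $D_n$ at $j+1$ upgrades the conclusion to convexity and continuous differentiability on the whole of $(j,j+2)$, closing the induction. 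I expect this last matching of derivatives at the interior integer to be the crux of the argument: it is a direct but fiddly computation with the weights $a_i,b_i$ and the binomial sums $B(\cdot,\cdot)$ in which the cancellation has to be spotted, and one must keep careful track of the parity bookkeeping between \eqref{eq:a3forma} and \eqref{eq:a4forma} and treat the bottom interval — the half-interval $(0,1)$ when $n$ is odd, the interval $(0,2)$ when $n$ is even — separately.
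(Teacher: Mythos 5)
Your plan is essentially the paper's own proof: induction on $n$ driven by the recursion \eqref{eq:dnxrec}, \lemref{lem:convx} supplying convexity on each unit interval (with the top interval $(n-1,n)$ handled by an explicit formula), and a matching of one-sided derivatives at the interior integers $m$ with $m+n$ odd to upgrade to convexity and continuous differentiability on the full length-two intervals; your base cases and your identification of each unit piece with a branch of the function $F$ of \lemref{lem:convx} are correct. The one substantive step you leave unexecuted is exactly that derivative matching, and there I would steer you away from resumming \eqref{eq:a3forma}--\eqref{eq:a4forma}: differentiating the case-(iii) and case-(iv) branches of \eqref{eq:dnxrec} and letting $x\to m^{\pm}$ kills the coefficients of the $D_{n-1}'$ terms and gives $D_n'(m^-)=D_{n-1}(m)-D_{n-1}(m-1)$ and $D_n'(m^+)=D_{n-1}(m+1)-D_{n-1}(m)$, so the whole ``fiddly cancellation'' collapses to the single identity $2D_{n-1}(m)=D_{n-1}(m-1)+D_{n-1}(m+1)$ for integers $m$ with $m+n$ odd, which follows from the integer-point recursion combined with the parity fact that $D_j(y)=D_{j-1}(y)$ for integer $y$ with $y+j$ odd. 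You should state and verify that identity explicitly to close your argument; with it in hand, your gluing step goes through exactly as you describe.
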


\begin{proof}
In order to prove this lemma it is very convenient to use a recursive definition of the function $D_n(x)$ which easily follows from the the description of the maximizing random walk $RW_{n,x}$. We have $D_0(x)=\I\{x\leq0\}$ and
\begin{equation}\label{eq:dnxrec}
D_{n+1}(x)=\begin{cases}
1 & \text{if }\,x\leq0,
\\
p_1D_{n}(x-1)+q_1D_{n}(x+1) & \text{if } x\in\Z \text{ and } x>0,
\\
p_2D_{n}(0)+q_2D_{n}(1) &\text{if } n\in 2\Z+1\text{ and } x<1,
\\
p_3D_{n}(\left\lfloor x\right\rfloor)+q_3D_{n}(x+1) &\text{if } \lfloor x\rfloor+n\in 2\Z \text{ and } x>0,
\\
p_4D_{n}(x-1)+q_4D_{n}(\left\lceil x\right\rceil) &\text{if }\lceil x\rceil+n\in 2\Z\text{ and } x>1,
\\
0 & \text{if }x>n.
\end{cases}
\end{equation}
where $p_i+q_1=1$ with $p_1=1/2$, $p_2=1-\{x\}$, $p_3=\frac1{1+\{x\}}$ and $p_4=\frac{1-\{x\}}{2-\{x\}}$.

\bigskip
To prove $\lemref{lem:convx}$ we use induction on $n$. If $n=0$ then $~{D_n(x)=\I\{x\leq 0\}}$ clearly satisfies \lemref{lem:dprop}.
Suppose that $\lemref{lem:convx}$ holds for $n=k-1\geq0$. Assume $n=k$.

First we prove that $D_k$ is convex and continuously differentiable on intervals $(0,1),(1,2),\ldots,(k-1,k)$. Since $D_k$ is rational and do not have discontinuities between integer points, it is clearly continuously differentiable on intervals $(0,1),(1,2),\ldots,(k-1,k)$. If $x\in(k-1,k]$ then by \eqref{eq:a3forma} we have that
$~{D_k(x)=2^{-k+1}/(x-k+1)}$. Thus the function $D_k$ is clearly convex on interval $(k-1,k)$. The convexity of $D_k$ on intervals $(0,1),(1,2),\ldots,(k-2,k-1)$ follows directly from $\lemref{lem:convx}$ and recursive definition $\eqref{eq:dnxrec}$.
To prove that the function $D_k$ is also continuously differentiable on intervals $~{(k-2,k), (k-4,k-2),\dots, (0,2\{k/2\})}$ it is enough to show that $D^{\prime}_k(m-0)=D^{\prime}_k(m+0)$ for all $m\in \N$ such that $k+m\in 2\Z+1$. If $x=m-0$ (we consider only the case $m>0$, since for $m=0$ the function $D_k(x)$ is linear), then by $\eqref{eq:dnxrec}$ we have
\begin{equation}D_k(x)=p_4D_{k-1}(x-1)+q_4D_{k-1}(m)\end{equation}
and since $D_{k-1}$ is continuously differentiable at $x-1$ we have $$D_k^{\prime}(x)=q_4^2D_{k-1}(x-1)+p_4D^{\prime}_{k-1}(x-1)
-q_4^2D_{k-1}(m).$$ Since $x=m-0$ we get that $D_k^{\prime}(x)=D_{k-1}(x-1)-D_{k-1}(m)$.
Similarly we have that if $x=m+0$ then $D_k^{\prime}(x)=D_{k-1}(m)-D_{k-1}(x-1)$. Since $D_{k-1}(m-1)-D_{k-1}(m)=D_{k-1}(m)-D_{k-1}(m-1)$ we get that  $D^{\prime}_k(m-0)=D^{\prime}_k(m+0)$.
Since $D_k$ is continuously differentiable on intervals $~{(k-2,k), (k-4,k-2),\dots, (0,2\{k/2\})}$ and $D_k$ is convex on intervals $(0,1),(1,2),\ldots,(k-1,k)$ we have that $D_k(x)\geq0$ is convex on $x=m$ for all $m\in\N$ such that $~{k+m\in 2\Z+1}$. This ends the proof of $\lemref{lem:convx}$.
\qed
\end{proof}

We also need the following lemma, which is used to find the minimal dominating linear function in a proof of \thmref{mainas}.
\begin{lemma}\label{lem:dnelyg} The function $D_n$ satisfies the following inequalities.\\
a) If $n\in2\Z+1$ and $0<x<1$ then
\begin{equation}p_2D_{n}(0)+q_2D_{n}(1)- p_3D_n(0)-q_3D_n(x+1)\geq 0.\label{eq:dnely.1stine}\end{equation}
b) If $\lfloor n+x\rfloor\in2\Z$ then
\begin{equation}p_3D_{n}(\left\lfloor x\right\rfloor)+q_3D_{n}(x+1)- p_1D_n(x-1)-q_1D_n(x+1)\geq0.\label{eq:dnely.2ndine}\end{equation}
c) If $\lfloor n+x\rfloor\in2\Z+1$ and $x>1$
\begin{equation}p_4D_{n}(x-1)+q_4D_{n}(\left\lfloor x\right\rfloor+1)- p_1D_n(x-1)-q_1D_n(x+1)\geq0.\label{eq:dnely.3rdine}\end{equation}
\end{lemma}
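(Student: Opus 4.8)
The plan is to prove all three inequalities by substituting the recursive definition \eqref{eq:dnxrec} of $D_n$, expanding every occurrence of $D_n$ in terms of $D_{n-1}$, and simplifying. Throughout I write $x=m+\alpha$ with $m=\lfloor x\rfloor$, $\alpha=\{x\}$; at each argument where $D_n$ is evaluated the applicable branch of \eqref{eq:dnxrec} is read off from the parity of $m+n$ and from whether that argument is an integer, lies in $(0,1)$, or exceeds $1$. The outcome in all three cases is a \emph{telescoping identity} that collapses the problem to one of two elementary facts: $D_k\le1$, or the convexity supplied by \lemref{lem:dprop}.

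Part (a): since $n$ is odd and $0<x<1$ we have $D_n(0)=1$, so clearing the factor $1+x$ and dividing by $x>0$ turns \eqref{eq:dnely.1stine} into $(1+x)D_n(1)-D_n(x+1)\ge x$. As $n$ is odd, the integer branch of \eqref{eq:dnxrec} gives $D_n(1)=\tfrac12\bigl(D_{n-1}(0)+D_{n-1}(2)\bigr)$, and the branch $\lceil\cdot\rceil+n\in2\Z$ (note $x+1\in(1,2)$) gives $D_n(x+1)=\tfrac{1-x}{2-x}D_{n-1}(x)+\tfrac1{2-x}D_{n-1}(2)$. Substituting and collecting terms, the inequality becomes — after multiplying by the positive factor $\tfrac{2(2-x)}{1-x}$ — the statement $D_{n-1}(x)\le\tfrac{2-x}{2}D_{n-1}(0)+\tfrac{x}{2}D_{n-1}(2)$, i.e.\ $D_{n-1}$ lies below the chord joining its values at $0$ and $2$. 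Since $n-1$ is even, $D_{n-1}$ is non-increasing and convex on $[0,2]$ by \lemref{lem:dprop}, which is exactly this.

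Parts (b) and (c) are proved in parallel, by induction on $n$ (the base $n=0$ being trivial from $D_0=\I\{\cdot\le0\}$). For (b), let $\Phi_n(x)$ be the left side of \eqref{eq:dnely.2ndine}. If $m=0$ then $0<x<1$, $n$ is even, $D_n(x-1)=D_n(0)=1$, and a direct computation gives $\Phi_n(x)=\tfrac{1-\alpha}{2(1+\alpha)}\bigl(1-D_n(x+1)\bigr)\ge0$. If $m\ge1$, then $m+n$ even sends each of $D_n(m)$, $D_n(m-1+\alpha)$, $D_n(m+1+\alpha)$ into a branch of \eqref{eq:dnxrec} expressed through $D_{n-1}$ (the integer branch for $D_n(m)$, the branch $\lfloor\cdot\rfloor+n\in2\Z$ for the other two), and a short cancellation yields the identity $\Phi_n(x)=\tfrac{\alpha}{1+\alpha}\,\Phi_{n-1}(x+1)$; since $(n-1)+\lfloor x+1\rfloor=m+n$ is again even, $\Phi_{n-1}(x+1)$ is a left side of \eqref{eq:dnely.2ndine} at level $n-1$, hence $\ge0$ by induction. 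Part (c) is the same: normalising the left side of \eqref{eq:dnely.3rdine} to $\Psi_n(x)=2D_n(m+1)-\alpha D_n(x-1)-(2-\alpha)D_n(x+1)$, the analogous expansion (now $m+n$ odd) gives, for $m\ge2$, the identity $\Psi_n(x)=\tfrac{1-\alpha}{2-\alpha}\Psi_{n-1}(x-1)$, which lowers $m$ by one; iterating reduces to $m=1$, where the expansion instead produces $\Psi_n(x)=(1-\alpha)\bigl((1+\alpha)D_{n-1}(1)-D_{n-1}(1+\alpha)-\alpha\bigr)$, and since $n$ is then even the parenthesis is precisely the quantity shown non-negative in part (a) at level $n-1$. (The degenerate case $\alpha=0$ of (b) and (c), $x$ an integer, reduces by the same expansion to the concavity of $D_n$ at that integer and is dispatched by the same induction.)

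The routine algebra is not the difficulty. The delicate point is the case analysis: at each of the several shifted arguments $m$, $m\pm1+\alpha$ of $D_n$ one must select the correct one of the (up to four) branches of \eqref{eq:dnxrec}, carefully tracking the parities of $m+n$ and of the $\pm1$ shifts, and one must separately dispose of the boundary situations where some $D_n$-values vanish (e.g.\ $x\ge n$) or where $x$ is integral. Once the branches are pinned down, the two telescoping identities above reduce everything to $D_k\le1$ and to the convexity of $D_{n-1}$ from \lemref{lem:dprop}.
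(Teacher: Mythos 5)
Your proof is correct and follows essentially the same route as the paper's: induction on $n$, expanding each term one level down via the recursion \eqref{eq:dnxrec}, which collapses (b) to the same inequality at $(n-1,x+1)$ and (c) to $(n-1,x-1)$, with the remaining cases settled by the convexity of $D_{n-1}$ from \lemref{lem:dprop}. The only (cosmetic) differences are that you close part (a) with the chord inequality on $[0,2]$ and route the $1<x<2$ case of (c) through part (a) at level $n-1$, whereas the paper uses the midpoint identity at integer points and the chord on $[1,2]$ --- the same underlying facts.
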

Here $p_i$ and $q_i$ are the same as in $\lemref{lem:convx}$.
\begin{proof}
We prove this lemma by induction on $n$. If $n=0$ then $\lemref{lem:dnelyg}$ is equivalent to the trivial inequality $1-1\geq0$.
Suppose that the properties $(a)$--$(c)$ holds for $n=k-1\geq0$. Assume $n=k$.

\textbf{Proof of (a).} We use the following equalities directly following from the definition of the function $D_k$.
If $k\in 2\Z+1$ and $x\in (0,1)$ then
\begin{eqnarray*}
D_k(0)&=&D_{k-1}(0);\\
D_k(1)&=&p_1D_{k-1}(0)+q_1D_{k-1}(2);\\
D_k(x+1)&=&p_4D_{k-1}(2)+q_4D_{k-1}(x);\\
D_{k-1}(x)&=&D_{k-1}(0)+x\left(D_{k-1}(1)-D_{k-1}(0)\right);
\end{eqnarray*}
We substitute all these equalities to $\eqref{eq:dnely.1stine}$ we get that the left hand side of $\eqref{eq:dnely.1stine}$ is equal to
\begin{equation*}
q_2p_3p_4x\left(p_1D_{k-1}(0)+q_1D_{k-1}(2)-D_{k-1}(1)\right)
\end{equation*}
The inequality $\eqref{eq:dnely.1stine}$ follows from the inequality $D_{k-1}(1)\leq D_{k}(1)=p_1D_{k-1}(0)+q_1D_{k-1}(2)$.

\textbf{Proof of (b).}
We rewrite every term in the inequality $\eqref{eq:dnely.2ndine}$ using the definition of the function $D_k$ to get
\begin{eqnarray*}
&&p_1 p_3(D_{k-1}(\left\lfloor x \right\rfloor-1 )+D_{k-1}(\left\lfloor x \right\rfloor+1 ))+q_3(p_3D_{k-1}(\left\lfloor x \right\rfloor+1 )+q_3D_{k-1}(x+2 ))-\\
&&p_1p_3(D_{k-1}(\left\lfloor x \right\rfloor-1 )+D_{k-1}(\left\lfloor x \right\rfloor+1 ))-p_1q_3(D_{k-1}(x)+D_{k-1}(x+2 ))\geq0.
\end{eqnarray*}
The inequality
$$
p_3D_{k-1}(\left\lfloor x \right\rfloor+1 )+q_3D_{k-1}(x+2 )\geq p_1D_{k-1}(x)+q_1D_{k-1}(x+2 )
$$
follows from the inductive assumption \eqref{eq:dnely.2ndine} for $n=k-1$.

\textbf{Proof of (c).} In this case we have to consider two separate cases.

\textbf{Case $x>2$}. We again rewrite every term in the inequality $\eqref{eq:dnely.3rdine}$ using the definition of the function $D_k$ to get
\begin{eqnarray*}
&&p_4(p_4D_{k-1}(x-2 )+q_4D_{k-1}(\left\lfloor x \right\rfloor ))+q_4p_1(D_{k-1}(\left\lfloor x \right\rfloor)+D_{k-1}(\left\lfloor x \right\rfloor+2 ))-\\
&&q_4p_1(D_{k-1}(\left\lfloor x \right\rfloor )+D_{k-1}(\left\lfloor x \right\rfloor+2 ))-p_4p_1(D_{k-1}(x-2)+D_{k-1}(x))\geq0.
\end{eqnarray*}
The inequality
$$
p_4D_{k-1}(x-2 )+q_4D_{k-1}(\left\lfloor x \right\rfloor )\geq p_1D_{k-1}(x-2)+q_1D_{k-1}(x)
$$
follows from the inductive assumption \eqref{eq:dnely.3rdine} for $n=k-1$.

\textbf{Case $1<x<2$}. Firstly let us again rewrite the inequality $\eqref{eq:dnely.3rdine}$ using the recursive definition of $D_k$. After combining the terms we get that $\eqref{eq:dnely.3rdine}$ is equivalent to
\begin{equation}
 x\,D_{{k-1}}
 \left( 1 \right) +(1-x)D_{{k-1}} \left( 0 \right) -D_{{k-1}} \left(
x \right) \geq0.
\end{equation}
Now we use the inequality
$$
D_{k-1}(x)\leq (2-x)D_{k-1}(1)+(x-1)D_{k-1}(2).
$$
to get that
\begin{eqnarray*}
&&xD_{k-1}(1)+(1-x)D_{k-1}(0)-D_{k-1}(x)\geq\\
&&2(x-1)D_{k-1}(1)+(1-x)D_{k-1}(0)-(x-1)D_{k-1}(2)=\\
&&(x-1)\left(2D_{k-1}(1)-D_{k-1}(0)-D_{k-1}(2)\right)=0.
\end{eqnarray*} which proves the inequality $\eqref{eq:dnely.3rdine}$.\qed
\end{proof}

Now we are ready to prove $\thmref{mainas}$.
\begin{proof}
For $x\leq 0$ to achieve $\sup$ in $\eqref{eq:mainas}$ take $M_n\equiv 0$. For $x>n$ the $\sup$ in $\eqref{eq:mainas}$ is equal to zero since $M_n \leq n$ for all $n=0,1,\ldots$.
To prove $\thmref{mainas}$ for $x\in(0,n]$ we use induction on $n$.

For $n=0$ the statement is obvious since $\P\{M_0\geq x\}=\I\{x\leq 0\}=D_0(x)$.
%The indicator function $\I\{x\leq0\}$ clearly satisfies our mentioned properties.
%For $n=1$ we use Chebyshev's inequality.  $$\P\{M_1\geq x\}=\E\I\{X_1\geq x\}\leq \E\frac{X_1+1}{x+1}=\frac{1}{x+1}.$$
%The inequality holds since the support of $X_1$ is $[-1,1]$ and the function $t\mapsto \frac{t+1}{x+1}$ dominates the indicator function $\I\{t\geq %x\}$ on interval $[-1,1]$.
Suppose that $\thmref{mainas}$ holds for $n=k>0$.
Assume $n=k+1$.
In order to prove $\thmref{mainas}$ it is enough to prove that $D_{k+1}$ satisfies the recursive relations $\eqref{eq:dnxrec}$.
We have
\begin{eqnarray*}
\P\{M_{k+1}\geq x\}&=&\P\{X_2+\cdots+X_{k+1}\geq x-X_{1}\}\\
&=&\E\P\{X_2+\cdots+X_{k}\geq x-t|X_1=t\}\\&\leq& \E D_{k}(x-X_1).
\end{eqnarray*}

Now for every $x$ we find a linear function $t\mapsto f(t)$ dominating the function $t\mapsto D_k(x-t)$ on interval $[-1,1]$ and touching it at two points, say $x_1$ and $x_2$, on different sides of zero. After this we consider a random variable, say $X\in\{x_1,x_2\}$ with mean zero. It is clear that $\E D_{k}(x-X_1)\leq \E D_{k}(x-X)$. We show that the numbers $x_1$ and $x_2$ are so that $\eqref{eq:dnxrec}$ holds.

Since $D_k$ is piecewise convex between integer points, the points where $f(t)$ touches $D_k(x-t)$ can be only the endpoints of an interval $[-1,1]$ or the points where $D_k(x-t)$ is not convex.

We consider four separate cases.\begin{eqnarray*}
&i)&\, x\in\Z;\\
&ii)&  k\in2\Z+1\quad \text{and}\quad x<1;\\
&iii)& \lfloor x\rfloor+k\in2\Z;\\
&iv)& \lfloor x\rfloor+k\in2\Z\quad \text{and}\quad x>1.
\end{eqnarray*}
\textbf{Case $(i)$.} Since $x\in\Z$ the dominating linear function touches $D_k(x-t)$ at integer points. So maximizing $X_1\in\{-1,0,1\}$.\\
If $x+k\in 2\Z+1$ then the function $D_k(x-t)$ is convex on $(-1,1)$ so maximizing $X$ takes values $1$ or $-1$ with equal probabilities~$1/2$.\\
If $x+k\in 2\Z$, then $$D_k(x)=\frac12\left(D_{k-1}(x-1)+D_{k-1}\left(x+1\right)\right)=\frac12(D_k(x-1)+D_k(x+1)),$$ so the dominating function touches $D_k(x-t)$ at all three points $-1,0,1$. Taking $X\in\{-1,1\}$ we end the proof of the case $(i)$.

The case $(i)$ was firstly considered in $\cite{bentkus2001inequality}$.

\textbf{Case $(ii)$.} Since $D_k$ is convex on intervals $(0,1)$ and $(1,3)$ the dominating minimal function can touch $D_k(x-t)$ only at $x,x-1,-1$. But due to an inequality $\eqref{eq:dnely.1stine}$ the linear function $f(t)$ going through $(x,D_k(0))$ and $(x-1,D_k(1))$ is above the point $(-1,D_k(x+1))$.

\textbf{Case $(iii)$.} Since the function $D_k$ is convex on intervals $(\left\lfloor x\right\rfloor-1,\left\lfloor x\right\rfloor)$ and $(\left\lfloor x\right\rfloor,\left\lfloor x\right\rfloor+2)$ the dominating minimal function can touch $D_k(x-t)$ only at $-1,\left\{ x\right\},1$. But due to an inequality $\eqref{eq:dnely.2ndine}$ the linear function $f(t)$ going through $(\left\{ x\right\},D_k(\left\lfloor x\right\rfloor))$ and $(-1,D_k(x+1))$ is above the point $(1,D_k(x-1))$.

\textbf{Case $(iv)$.} Since the function $D_k$ is convex on intervals $(\left\lfloor x\right\rfloor-1,\left\lfloor x\right\rfloor+1)$ and $(\left\lfloor x\right\rfloor+1,\left\lfloor x\right\rfloor+3)$ the dominating minimal function can touch $D_k(x+t)$ only at $-1,\left\{x\right\}-1,1$. But due to an inequality $\eqref{eq:dnely.3rdine}$ the linear function $f(t)$ going through $(1,D_k(x-1))$ and $(\left\{x\right\}-1,D_k(\left\lfloor x\right\rfloor+1))$ is above the point $(-1,D_k(x+1))$.\qed
\end{proof}
\bibliographystyle{alpha}

\begin{thebibliography}{30}

\bibitem[Ben87]{bentkus87banach}
V.~Bentkus.
\newblock Large deviations in Banach spaces.
\newblock {\em Theory of Probability \& Its Applications}, 31(4):627--632, 1987.

\bibitem[Ben01]{bentkus2001inequality}
V.~Bentkus.
\newblock An inequality for large deviation probabilities of sums of bounded
  iid random variables.
\newblock {\em Lithuanian Mathematical Journal}, 41(2):112--119, 2001.

\bibitem[Ben04]{bentkus2004hoeffding}
V.~Bentkus.
\newblock On hoeffding's inequalities.
\newblock {\em Annals of probability}, pages 1650--1673, 2004.

\bibitem[BKZ06]{bentkus2006domination}
V.~Bentkus, N.~Kalosha, and M.~Van~Zuijlen.
\newblock On domination of tail probabilities of (super) martingales: explicit
  bounds.
\newblock {\em Lithuanian Mathematical Journal}, 46(1):1--43, 2006.

\bibitem[BD10]{BD2010}
D.~Dzindzalieta and V.~Bentkus.
\newblock A tight Gaussian bound for weighted sums of Rademacher random variables.
\newblock {\em preprint}, 2010.



\bibitem[DJS12]{DJS2012}
D.~Dzindzalieta, T.~Ju\v skevi\v cius, and M.\v Sileikis.
\newblock Optimal probability inequalities for random walks related to problems
  in extremal combinatorics.
\newblock {\em SIAM Journal on Discrete Mathematics}, 26(2):828--837, 2012.

\bibitem[Hoe63]{hoeffding1963probability}
W.~Hoeffding.
\newblock Probability inequalities for sums of bounded random variables.
\newblock {\em Journal of the American Statistical Association},
  58(301):13--30, 1963.

\bibitem[KS66]{karlin1966tchebycheff}
S.~Karlin and W.J. Studden.
\newblock {\em Tchebycheff systems: With applications in analysis and
  statistics}, volume 376.
\newblock Interscience Publishers New York, 1966.

\bibitem[PB75]{petrov1975sums}
V.V. Petrov and A.A. Brown.
\newblock {\em Sums of independent random variables}, volume 197-5.
\newblock Springer-Verlag Berlin, 1975.

\bibitem[SW09]{shorack2009empirical}
G.R. Shorack and J.A. Wellner.
\newblock {\em Empirical processes with applications to statistics}, volume~59.
\newblock Society for Industrial Mathematics, 2009.

\bibitem[Tal95]{talagrand1995missing}
M.~Talagrand.
\newblock The missing factor in hoeffding's inequalities.
\newblock In {\em Annales de l'IHP Probabilit{\'e}s et statistiques}, volume
  31-4, pages 689--702. Elsevier, 1995.

\end{thebibliography}

\end{document}